\documentclass{elsarticle}
\usepackage{amsmath,amssymb,yhmath}
\usepackage{graphicx}
\usepackage{xcolor,epic, tikz}
\newtheorem{thm}{Theorem}
\newtheorem{lem}[thm]{Lemma}
\newdefinition{defin}{Definition}
\newdefinition{rmk}{Remark}
\newproof{pf}{Proof}
\newproof{pot}{Proof}
\def\C{\hskip 3pt\hbox{$\vrule height 6pt\kern-3ptC$}}

\def\R{\mathbb{R}}

\font\tenneg=cmbx10 \font\sevenneg=cmbx7 \font\fiveneg=cmbx5
\newfam\negfam
\textfont\negfam=\tenneg \scriptfont\negfam=\sevenneg
\scriptscriptfont\negfam=\fiveneg

\font\tenbold=cmmib10 \font\sevenbold=cmmib7 \font\fivebold=cmmib5
\newfam\boldfam
\textfont\boldfam=\tenbold \scriptfont\boldfam=\sevenbold
\scriptscriptfont\boldfam=\fivebold

\font\tenbsy=cmbsy10 \font\sevenbsy=cmbsy7 \font\fivebsy=cmbsy5
\newfam\bsyfam
\textfont\bsyfam=\tenbsy \scriptfont\bsyfam=\sevenbsy
\scriptscriptfont\bsyfam=\fivebsy

\begin{document}

\begin{frontmatter}

\title{On the order of Runge Kutta methods  reusing last stage \tnoteref{t1}}

\tnotetext[t1]{This work was supported by Ministerio de Ciencia, Innovaci\'on y Universidades, Agencia estatal de Investigaci\'on, Spain, under the project PID2022–141385NB–I00.}

 \author[label1]{M. Calvo}
 \ead{calvo@unizar.es}

 \author[label1]{J.I. Montijano\corref{cor1}}
 \ead{monti@unizar.es}

 \author[label1]{L. R\'{a}ndez}
 \ead{randez@unizar.es}

 \cortext[cor1]{Corresponding author}

 \address[label1]{Departamento  Matem\'{a}tica Aplicada, Universidad de Zaragoza, 50009-Zaragoza, Spain.}

%\baselineskip=0.9\normalbaselineskip \vspace{-3pt}

\begin{keyword}

%% MSC codes here, in the form: \MSC code \sep code
%% or \MSC[2008] code \sep code (2000 is the default)

\MSC 65L05 \sep 65L07
\end{keyword}

\begin{abstract}

In this paper we consider explicit Runge--Kutta (RK) methods 
for the numerical solution of Initial Value Problems (IVPs) in differential equations
in which the last function evaluation  in a step
is reused, substituting the first evaluation for the next step.
This requirement implies that, except for the first step, the 
computational cost of an $s$--stage explicit RK method reduces from $s$ to
$(s-1)$ function evaluations per step.
It will be seen that, in general,  if a RK method has order $p$, when
introducing this reused stage the new method has in general order $<p$.

In this context we study the conditions on the coefficients
of an explicit RK method with order $p$ such that the global error of the method 
reusing the last stage has the same order $p$ of accuracy.
A detailed study on the conditions that the coefficients of an $s$ stage
method with order $p$ must satisfy for the method reusing the last stage has also order $p$.
In particular, for the families of $s$--stage explicit RK methods with the highest order $p=s$ we identify 
those methods reusing the last stage which retain 
the order of of the original method.
The results of some numerical experiments are presented to confirm 
the theoretical results.
\end{abstract}

\end{frontmatter}

\section{Introduction}

Runge Kutta (RK) methods for the numerical solution of IVPs in systems 
of differential equations
\begin{equation}\label{A1}
\dfrac{d y(t)}{dt}= f(t, y(t)),
\quad
y(0)= y_0 \in \R^d,
\quad 
t \in [0,T]
\end{equation}
are among the most popular algorithms for the numerical solution of (\ref{A1}). In particular  explicit $s$--stage RK methods advance the numerical solution of \eqref{A1} from $ (t_n=n h, y_n)$ to 
$ (t_{n+1}= t_n+h, y_{n+1})$ with the formulas
\begin{equation}\label{A2}
y_{n+1} = y_n + h \; \left[
b_1 \; f_{n,1} + b_2 \; f_{n,2}+ \ldots + b_n \; f_{n,s}
\right],
\end{equation}
where
\begin{eqnarray}
f_{n,1} &=& f( t_n , y_n)
\nonumber
\\
f_{n,2} &=& f( t_n + c_2 h , y_n + h \; a_{21} f_{n,1})
\nonumber
\\
\vdots & \vdots
\label{A3}
\\
f_{n,s} &=& f( t_n + c_s h, y_n + h a_{s,1} f_{n,1}+ \ldots
+h a_{s,s-1} f_{n, s-1} ).
\nonumber
\end{eqnarray}
It is common to collect the available parameters of (\ref{A2}),(\ref{A3}) , $ b_j, c_j, a_{jk} $ in the form of the Butcher array
\cite{Butcher:16, HNW}

\begin{equation}\label{A4}
\begin{array}{cccccc}
c_1=0 & \vline & 0 &  &  &  \\ 
c_2 & \vline & a_{21} & 0 &  &  \\ 
\vdots & \vline & \vdots &  &  &  \\ 
\vdots  & \vline & \vdots &  &  &  \\ 
c_s & \vline  & a_{s1} & \ldots & a_{s,s-1} & 0 \\ 
\hline
 & \vline & b_1 & b_2  & \ldots & b_s
\end{array} 
\qquad
\begin{array}{ccc}
 c & \vline & A
 \\
 \hline
 0 & \vline & b^T
\end{array}
\end{equation}
with the condition 
\[
 c_i = \sum_{j=1}^{i-1} \;  a_{ij}
 \qquad
  i=1,..,s
\]

Usually the free parameters of (\ref{A4}) are selected in such a way that the method achieves the highest accuracy possible for all
IVP (\ref{A1}) with $f(t,y)$ sufficiently smooth.
Denoting by $y(t; t_n,y_n)$ the local solution of differential problem $y'(t) = f(t, y(t))$ 
that passes by the point $(t_n, y_n)$, that is,  $y(t_n)=y_n$,
the accuracy is measured by the local error, defined  as the error after
one step $\Vert y_{n+1} - y(t_n+h; t_n,y_n) \Vert$.
Then the method has order $p$ \cite{HNW} if
\begin{equation}\label{A5}
\Vert y_{n+1} - y(t_n+h; t_n,y_n) \Vert=
\mathcal{O} (h^{p+1}),
\quad h \to 0
\end{equation}
for all IVP (\ref{A1}).
This condition implies that the global error of (\ref{A1}) behaves as
$ \Vert y(T) - y_N \Vert = \mathcal{O} (h^{p})$ with $ N h = T$.
In this setting it is well known that for $ s \le 4$ it is possible to
get order $ p=s$ whereas for $ s>4$ the maximum order $ p < s$.

Clearly the computational cost of an explicit RK method is associated to the number of function evaluations at each step i.e. the number of stages. With the aim to reduce this cost we examine here explicit RK methods with $c_s=1$ in which the first evaluation of the $n$-th step,
$f_{n,1}=f(t_n,y_n)$, is substituted by the last evaluation of the 
$(n-1)$-th step, $f_{n-1,s}$ i.e.
$f_{n,1}=f( t_{n-1}+ c_s \; h, y_{n-1}+h\sum_{j=1}^{s-1} a_{s,j} f_{n-1,j})$.
These methods will be called explicit RK methods reusing last stage, and we will denote them by RKRLS.
Thus instead of (\ref{A2}),(\ref{A3}) we have the method

\begin{equation}\label{A6}
y_{n+1} = y_n + h \; \left[
b_1 \; f_{n,1} + b_2 \; f_{n,2}+ \ldots + b_n \; f_{n,s}
\right],
\end{equation}
with
\begin{eqnarray}
f_{n,1} &=& f_{n-1,s}  = f( t_{n-1}+ c_s \; h, y_{n-1} + h \sum_{j=1}^{s-1} a_{s,j} f_{n-1,j})
\nonumber
\\
f_{n,2} &=& f( t_n + c_2 h , y_n + h \; a_{21} f_{n,1})
\nonumber
\\
\vdots & \vdots
\label{A7}
\\
f_{n,s} &=& f( t_n + c_s h, y_n + h a_{s,1} f_{n,1}+ \ldots
+h a_{s,s-1} f_{n, s-1} ).
\nonumber
\end{eqnarray}

In this setting (except for the first step) we only have $(s-1)$
function evaluations per step.
However, even in the case that the explicit RK method (\ref{A2}),(\ref{A3}) has order $p$,
the associated RKRLS method (\ref{A6})(\ref{A7})  will not have in general the same convergence order. 

The method (\ref{A6})(\ref{A7}) advances the numerical solution from $t_n$ to $t_{n+1}=t_n+h$
by considering at $t_n$ two approximations to the solution:
$y_n \simeq y(t_n)$
and
$f_{n,1}\equiv f_{n-1,s}= f(t_{n-1} +h, y_{n-1}+ h \sum_{j=1}^{s-1} a_{s,j} f_{n-1,j})
\simeq y'(t_n)$, obtaining with them the next two approximations $y_{n+1}$ and $f_{n+1,1}$.
The local error cannot be defined as in standard one step methods
based only in the approximation $y_n$, but a two-approximations, one step formulation can be used
to apply the theory  in \cite{HNW} (Section III.8, p. 436).
Here for technical reasons, instead of  $f_{n-1,s}$ we will use the quantity
$y_{n,1}$ defined by
\[
y_{n,1}  \equiv y_{n-1} + h \sum_{j=1}^{s-1} a_{s,j} f_{n-1,j}.
\]
Then the method (\ref{A6})(\ref{A7}) can be written as a two-approximations, one step method in the form
\begin{equation}
\label{asonestep}
\begin{pmatrix}  y_{n+1} \\  y_{n+1,1} \end{pmatrix} =
\begin{pmatrix}  y_{n} + h \sum_{i=1}^s  b_i f_{n,i} \\[6pt] 
y_{n} + h \sum_{i=1}^{s-1}  a_{si} f_{n,i} \end{pmatrix} =
\begin{pmatrix} 1 & 0  \\  1 & 0 \end{pmatrix} 
\begin{pmatrix}  y_{n} \\  y_{n,1}\end{pmatrix} +
h \Phi(t_n, y_n, y_{n,1},h) 
\end{equation}
where  $f_{n,1}\equiv f(t_n, y_{n,1})= f_{n-1,s}$ depends on the second carried vector $y_{n,1}$ and  is in fact the last stage of the previous step. Since this value has been computed in the previous step, it does not need to be computed in this step.

The vector $y_{n+1,1}$ is an approximation to the  exact value  $y(t_{n} + h)$. Thus, the order of the method can be studied by means of standard Butcher series  as in
\cite{HNW} (Section III.8, p. 436). Unfortunately, $y_{n+1,1}$ usually approximates $y(t_{n} + h)$  with order $q<p$ and this theory will ensure that the method has order $\min(q+1,p)$.  To get order $p$, $y_{n+1,1}$ should have order $p-1$ at least and this can be
a very restrictive condition. 

Two-step Runge-Kutta methods were considered for the first time in \cite{ByLa:66}, where they were named pseudo-Runge-Kutta methods. They have been considered later, for example in \cite{HW:73} and \cite{JackZen:95}. In both cases the 
new approximation $y_{n+1}$ can be expressed as
\begin{equation}
\label{manystepsmethod}
y_{n+1}= y_n + h \Phi(t_n,y_n, v_{1n}, v_{2n}, \ldots)
\end{equation}
where $v_{jn}$ are approximations to the solution $y(t)$  of \eqref{A1} at certain previous points $t_{n-1} + a_j h$, of order $p$.
In \cite{Gruttke:70} and \cite{CaCoCo:90} the computation of $y_{n+1}$ uses the stages of the previous step so that it can be expressed as
\eqref{manystepsmethod} but now $v_j$ are approximations to $y'(t_{n-1} + a_j h)$ of order $p-1$.
In all these cases the theory in \cite{HNW} gives order $p$ for the resulting methods.

In \cite{JackTra:95} general two-steps Runge-Kutta methods were studied. Since the approach in \cite{HNW} could not be directly applied, the authors turned to an approach due to Albrecht \cite{Albrecht:85, Albrecht:87} to derive order conditions for the method.  Nevertheless, in \cite{HaWa:97} it was shown that  the results in \cite{HNW} can also be applied to general two-step Runge-Kutta methods formulated as \eqref{manystepsmethod} by assuming that  $v_{in}$ approximates certain Butcher series 
$B(\Phi_i, h)$ evaluated at the solution $y(t)$ at $t_n$.  Here, we will  study the accuracy order of RKRLS methods using this last approach. Starting with a standard RK method order $p$ we derive additional conditions on their coefficients that ensure a global error of the RKRLS of order $p$ even though the approximation $y_{n+1,1}$ has order $q < p-1$.

The paper is organized as follows: In Section 2 we present the theoretical results on the order of convergence of Runge-Kutta methods reusing the last stage. We define the local order of the methods and we prove that local order $p$ implies global convergence of order $p$.
Then we obtain conditions on the coefficients of the RK to have RKRLS method of order $p$, for $p=2,3,4,5$.
In Section 3 we obtain RKRLS methods with orders from 2 to 5, showing that there exist methods with order $p$ and $s=p$ stages ($p-1$ effective stages) for $p \le 4$ and order $5$ with $s=6$ stages.
In Section 4 we verify the theoretical results by means of some numerical experiments.
Finally, in Section 5 we present some conclusions and future work.

\section{Order and convergence}
\subsection{Order theory}

For a Runge-Kutta method $(A,b)$ with $s$ stages, $n$ consecutive steps of size $h$ are equivalent to one step of size $H=nh$ with the $n\times s$ stages composed method $(A^{(n)}, b^{(n)})$ with coefficients
\[
A^{(n)}=\dfrac{1}{n}
\begin{pmatrix}
A  &   \\
eb^T  &  A  \\
\cdots & \cdots & \ddots  \\
e b^T & \cdots  & e b^T &  A
\end{pmatrix},
\qquad   b^{(n)}= \dfrac{1}{n}
\begin{pmatrix}
b   \\
b \\
\vdots   \\
b
\end{pmatrix},
\]
where $e= (1, \ldots, 1)^T \in \R^s$.
Note that if the RK base method $(A, b)$ has order 1, since $Ae=c$, the composed method satisfies
\[
c^{(n)} = A^{(n)}  e^{(n)}=\dfrac{1}{n}
\begin{pmatrix}
c  \\
e+c  \\
\vdots   \\
(n-1)e +c
\end{pmatrix}.
\]
where $e^{(n)}= (1, \ldots, 1)^T \in \R^{ns}$.
Here, a superindex $(n)$ represents vectors and matrices of dimension $sn$.

It can be easily proved that the RK method $(A,b)$ has local order $p$ if and only if the composed method 
$(A^{(n)}, b^{(n)})$ has also the same order for any $n\ge2$.
Then, the local error of $(A,b)$ behaves as $\mathcal{O}(h^{p+1})$, and
the local error of the n-step composed method $(A^{(n)}, b^{(n)})$ behaves as $\mathcal{O}(H^{p+1})$.

For a RKRLS method, the first stage is substituted by the last one of the previous step, that is, at the step from $t_n$ to $t_{n+1}=t_n+h$, $f_{n,1}= f_{n-1,s} = h f(y_{n-1} + ha_{s,1}f_{n-1,1} + \ldots + h a_{s,s-1} f_{n-1,s-1})$. The first two steps of the method are equivalent to one step of the $2s$ stage method with coefficients
\[
\widehat A^{(2)}=\dfrac{1}{2}
\begin{pmatrix}
0  &    & & & & | &\\
a_{21} &  0  & & & & |\\
\cdots & \cdots & \ddots &\ddots  & & |\\
a_{s1} & a_{s2}& \cdots & a_{s,s-1} & 0 & | & 0 \\[3pt]
\hline
a_{s1} & a_{s2}& \cdots & a_{s,s-1} & 0 &  | & 0  & 0\\
b_1 & b_2 & \cdots & b_{s-1} & b_s  &  | & a_{21} &0 \\
\cdots & \cdots & \ddots &\ddots  & & | \\
b_1 & b_2 & \cdots & b_{s-1} & b_s  & | & a_{s1} & \cdots & a_{s,s-1} & 0 \\
\end{pmatrix},
\  \widehat b^{(2)}= \dfrac{1}{2}
\begin{pmatrix}
b   \\
b \\
\end{pmatrix}.
\]
that we will name 2-step composed method.
Observe that the rows $s$ and $s+1$ of the matrix $\widehat A^{(2)}$ are identical, which means that the method is reducible and the number of function evaluations required is in practice $2s-1$, in agreement with the saving of one evaluation at each step.

In general, $n$ consecutive steps of size $h$ of the RKRLS method are equivalent to one step of size $H=nh$
with the corresponding  $n$-step composed method  $(\widehat A^{(n)}, \widehat b^{(n)})$ given by
\begin{equation}
\label{nreused}
\widehat A^{(n)}=\dfrac{1}{n}
\begin{pmatrix}
A  &   \\
E &  A  \\
e b^T &   E & A \\
\cdots & \cdots & \ddots &\ddots  \\
e b^T &  \cdots & e b^T & E & A
\end{pmatrix},
\  \widehat b^{(n)}= \dfrac{1}{n}
\begin{pmatrix}
b   \\
b \\
\vdots   \\
b
\end{pmatrix},
\end{equation}
where $E=e b^T+M$, $M=e_1 (a-b)^T$, $a=( a_{s,1}, \ldots, a_{s,s-1}, 0)^T$ is the last row of the matrix $A$ and
$e_1= (1, 0, \ldots, 0)^T\in \R^s$.
Clearly, since $a^T e=1$, $E e=e$ and $\widehat c^{(n)} = \widehat A^{(n)}  e^{(n)}= c^{(n)}$.

The standard definition of local error of order $p$ can not be directly applied to RKRLS methods
because it does not allow us to obtain local order $p$. Then, using the composed methods
we define the local error of these methods in the following way
\begin{defin}
\label{def1}
We will say that a RKRLS method has local order $p$ if the $n$-composed method $(A^{(n)}, b^{(n)})$ has local order $p$ (as defined in \cite[Definition 1, p. 134]{HNW}) for any $n\ge 2$.
\end{defin}

Let us see now that, with the above definition, if a RKRLS method has local order $p$, then it is convergent of order $p$, that is, the global error satisfies $y_n - y(t_n)= \mathcal{O}(h^p)$.  

Hereafter we will assume, without loss of generality, that the differential problem is autonomous, that is $f=f(y)$.

Assume that the method has order $p$ and let us consider the composed method with $r$ steps (we will use the case $r=p-1$ later). Starting from the initial point $(t_0, y_0)=(t_0, y(t_0))$ we get $y_{r}$, approximation of order $p$ to $y(t_0+rh)$ and $y_{r,1}=y_{r-1} + h a_{s1}f_{r-1,s} + h a_{s2} f_{r,2} + \ldots + h a_{s,s-1} f_{r,s-1}$ that approximates $y(t_0+rh)$ in general with order $1$. This means that 
$y_r- y(t_0+rh) = \mathcal{O}(h^{p+1})$ and $y_{r,1} - y(t_0+rh) = \mathcal{O}(h^2)$. Using that the approximation 
$y_{r,1}$ is in fact the approximation provided by the method $(A^{(r)}, (b^T,\ldots,b^T, a^T)^T)$ we can obtain easily the coefficients of the Butcher series of $y_{2,1}$ around the initial point $(t_0, y_0)$ given by
\[
y_{r,1} = y_0 + \dfrac{1}{r}(b^T e + a^T e) f(y_0) (rh)+ 
\dfrac{1}{2!} 2\dfrac{1}{r^2} (b^Tc +a^T (e+c)) f'(y_0) f(y_0) (rh)^2 + \cdots
\]
To study the effect of this approximation in the next step, it is more convenient to have the expansion of $y_{r,1}$
around the current point $(t_r,y_r)$. This can be obtained by means of the adjoint of the method $(A^{(r)}, b^{(r)})$  (see \cite{HNW}, p. 219). This method is given by
\begin{equation}
\label{adjointmethod}
\begin{array}{l}
\bar A^{(r)} = e^{(r)} {b^{(r)}}^T - A^{(r)}= \dfrac{1}{r}
\begin{pmatrix}
e b^T -A  &  e b^T & & \cdots & e b^T \\[5pt]
 - M & e b^T -A   & & \cdots& e b^T\\[5pt]
0 &  - M  & \ddots & & e b^T\\
\vdots  &  \vdots &  \ddots & \ddots \\
0 & 0 &  \cdots  & - M & e b^T -A  
\end{pmatrix} \\[45pt]
\bar c^{(r)} =   \bar A^{(r)} e^{(r)} = \dfrac{1}{r}
\begin{pmatrix}
re-c  \\
\vdots \\
e-c\\
\end{pmatrix},
\end{array}
\end{equation}
and the approximation $y_{r,1}$ (it is in fact the last stage of the adjoint method) is given by the method 
$(\bar A^{(r)}, \bar b^{(r)})$  where
\begin{equation}
\label{badjointmethod}
\bar b^{(r)} \equiv  ((e_s^{(r)})^T \bar A^{(r)})^T = \dfrac{1}{r}
\begin{pmatrix}
0  \\
\vdots \\
b -a\\
\end{pmatrix},
\end{equation}
applied with step $-H=-rh$. Computing the Butcher series for this method we get
\begin{equation}
\label{seriesyr1}
\begin{array}{rl}
y_{r,1}= & y_{r} + \sum_{\tau} \dfrac{\alpha(\tau)}{\rho(\tau)!} \gamma(\tau)  \phi_r(\tau) F(\tau)(y_{r}) (-r h)^{\rho(\tau)} \\[8pt]
= & y_{r} + \sum_{\tau} \dfrac{\alpha(\tau)}{\rho(\tau)!} \gamma(\tau)  \varphi_r(\tau) F(\tau)(y_{r}) h^{\rho(\tau)}
\end{array}
\end{equation}
where (see e.g. \cite[Section II.2, p. 145]{HNW}, p. 145 or \cite[Chapter 3, p 143]{Butcher:16}) $F(\tau)(y_{r})$ is the elementary differential associated to the tree $\tau$ evaluated at the point $y_{r}$, $\alpha(\tau)$ is a symmetry factor, $\gamma(\tau)$
is a density factor, $\rho(\tau)$ is the order of the tree and 
$\phi_r(\tau)= (-r)^{\rho(\tau)}\varphi_r(\tau)$ is the coefficient, method dependent,  associated to $\tau$. The value of $\varphi_r(\tau)$ 
for the first trees $\tau$ are given in Table \ref{tableexpansion}. Note the minus signs in the coefficients of odd powers of $h$ introduced by the negative step $-H$.

\begin{table}
\label{tableexpansion}
\begin{centering}
\begin{tabular}{l| @{\hspace{20pt}}l}
\hskip 0.6cm$F(\tau)$  &  \hskip 1.5cm $\varphi_r(\tau)$  \\[2pt]
\hline
$f$                   &  0  \\
$f'f$                 &  $(b-a)^T (e-c)$ \\[2pt]
$f''(f,f)$            &  \hspace{-8pt}$-(b-a)^T (e-c)^2$ \\[2pt]
$f'f'f$               &  \hspace{-8pt}$-(b-a)^T[ - M (2e-c) +(eb^T -A) (e-c)]$\\[2pt]
$f'''(f,f,f)$         &  $(b-a)^T (e-c)^3$ \\[2pt]
$f''(f'f,f)$          &  $(b-a)^T([- M (2e-c) + (eb^T -A) (e-c)]\cdot (e-c))$\\[2pt]
$f'f''(f,f)$          &  $(b-a)^T [- M (2e-c)^2+ (eb^T -A) (e-c)^2]$ \\[2pt]
$f'f'f'f$             &  $(b-a)^T [ (- M + eb^T -A)^2 (e-c)+ M(eb^T -A)e]$ \\[2pt]
$f^{\text{iv}}(f,f,f,f)$ &  \hspace{-8pt}$-(b-a)^T (e-c)^4$ \\[2pt]
\ldots 
\end{tabular}
\end{centering}
\caption{Coefficients of the Butcher expansion of the last stage of the RK}
\end{table}

The key point in this study is that  the first coefficients of the Butcher series of $y_{r,1}$ and $y_{r+1,1}$ are equal.  

\begin{lem}
\label{lema1}
The coefficients of the Butcher series \eqref{seriesyr1} satisfy $\varphi_{r-1}(\tau)=\varphi_{r}(\tau)$ for all tree $\tau$ with order $\rho(\tau) \le r$, 
\end{lem}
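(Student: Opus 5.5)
The approach is to write the coefficient $\varphi_r(\tau)$ as an explicit elementary weight of the adjoint method $(\bar A^{(r)},\bar b^{(r)})$ from \eqref{adjointmethod}--\eqref{badjointmethod}, and then exploit the block bidiagonal-plus-upper structure of $\bar A^{(r)}$. Since $\bar b^{(r)}$ has nonzero entries only in its last block, $\frac1r(b-a)$, every elementary weight $\bar b^{(r)T}\Phi^{(r)}(\tau)$ depends only on the last block component of the stage-weight vectors $\Phi^{(r)}(\tau')$ of the subtrees. These are built recursively through $\Phi^{(r)}(\tau)=\prod_i \bar A^{(r)}\Phi^{(r)}(\tau_i)$, with componentwise products, starting from $\Phi^{(r)}(\bullet)=e^{(r)}$. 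After the scaling by $(-r)^{\rho(\tau)}$, the factors $1/r$ are absorbed. So I will work with the unscaled matrix $r\bar A^{(r)}$ and the shifted abscissae $r\bar c^{(r)}$, whose blocks are $(r-k+1)e-c$ for $k=1,\dots,r$. The last block is $e-c$, independent of $r$, which is consistent with Table \ref{tableexpansion}.

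The key step is a locality claim. For a tree of order $\rho$, the last block of $\Phi(\tau)$ involves only the last $\rho$ blocks of the stage vectors, and those blocks enter exactly as in the method with $r-1$ steps. I would prove this by induction on $\rho(\tau)$, using the following fact. Block $k$ of $r\bar A^{(r)}v$ equals $-Mv_{k-1}+(eb^T-A)v_k+eb^T\sum_{j>k}v_j$. Every block $j>k$ enters only through the scalar $b^Tv_j$, and the lower part enters only through $M=e_1(a-b)^T$, which couples to just one previous block. In order to compare $r$ and $r-1$, I would index blocks from the end, setting $k'=r-k$. In this indexing the abscissa blocks are $(k'+1)e-c$ for both methods, and the matrix has the same form. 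The only difference is that the method with $r$ steps has one extra block at the front, $k'=r-1$, which appears only through the coupling $-Mv_{k-1}$ into block $k'=r-2$ and through the upper sums of earlier rows.

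The upper sums are the delicate point, because $eb^T\sum_{j>k}v_j$ runs over blocks closer to the end, not farther away. In reversed indexing, row $k'$ sees blocks $k''<k'$ through the upper sums, which is harmless since those are nearer the end, and sees the single farther block $k'+1$ through $-M$. So information flows toward the end only through the $-M$ coupling, one block per level of the recursion. I will show by induction that block $k'$ of $\Phi(\tau)$ coincides for $r$ and $r-1$ whenever $k'+\text{height}(\tau)\le r-1$, or more crudely whenever $k'+\rho(\tau)-1\le r-1$. The base case is $\Phi(\bullet)=e$ in all blocks. The inductive step uses the formula above: block $k'$ of $\bar A\Phi(\tau_i)$ involves blocks $\le k'+1$ of $\Phi(\tau_i)$, and $\rho(\tau_i)\le\rho(\tau)-1$. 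Taking $k'=0$ and applying the final contraction with $(b-a)^T$ gives equality whenever $\rho(\tau)\le r$, after checking that the index bookkeeping matches exactly.

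I expect the main obstacle to be making this bookkeeping exact. A block that exists for $r$ but not for $r-1$, namely the first one, has to be shown to contribute only through a chain of $-M$ couplings whose length is at least $\rho(\tau)$. Off-by-one errors are the danger here. I would check the claim against Table \ref{tableexpansion} for $f'f'f$, whose entry already contains one $M$ with $2e-c$, and for $f'f'f'f$.
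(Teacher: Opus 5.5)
Your proposal is correct and is essentially the paper's argument. Both run an induction over trees showing that the trailing blocks of the rescaled weight vectors $\Phi_{r-1}(\tau)$ and $\Phi_r(\tau)$ agree, losing one block of agreement per multiplication by $\bar A$ (only the subdiagonal $-M$ coupling looks away from the end), and then contract with $\bar b$, which is supported on the last block. Your height-based invariant in reversed indexing is cleaner than the paper's count, which is stated as ``last $r-j$ blocks'' and is off by one.

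The boundary case you flag is closed by $Me=e_1(a-b)^Te=0$, since $a^Te=b^Te=1$. This is exactly why the abscissa blocks $(k'+1)e-c$ coincide for both methods, and why the lemma reaches $\rho(\tau)\le r$ rather than only $\rho(\tau)\le r-1$.
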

\begin{pf}
The coefficient $\varphi_r(\tau)$ is given by
\[
\varphi_r(\tau)= (-r)^{\rho(\tau)} \bar b^{(r)} \Phi_r(\tau)
\]
where $\Phi_r(\tau)$ is a vector depending on the matrix $\bar A^{(r)}$ and vector $\bar c^{(r)}$ given in \eqref{adjointmethod}, and is defined recursively by
\begin{itemize}
\item
For the unique tree $\tau_0$ of order one, $\Phi_r(\tau_0)= e^{(r)}$ 
\item
For the unique tree $\{\tau_0\}$ of order two, $\Phi_r(\{\tau_0\}) = A^{(r)} \Phi_r(\tau_0)= \bar c^{(r)}$
\item
If   $\tau= \{\tau_1, \ldots, \tau_k\}$, then   
$\Phi_r(\tau) = A^{(r)} \Phi_r(\tau_1)\cdot \ldots \cdot A^{(r)} \Phi_r(\tau_k)$
\end{itemize}
Note that the vector $\bar b^{(r)}$ carries a factor $1/r$ and  the vector $\Phi_r(\tau)$ carries a factor 
$1/r^{\rho(\tau)-1}$. Consequently, the coefficient $\varphi_r(\tau)$ does not depend explicitely on $r$.

Let us see that for  $\rho(\tau)=j \le r$ the last $(r-j)s$ components of the vectors 
$\Phi_{r-1}(\tau)$ and $\Phi_r(\tau)$ are equal, that is, the last $(r-j)$, $s$-dimensional, blocks of the vectors are equal.

First, it is easy to verify that if the last $m$ blocks of the vectors 
$r^{\rho(\tau))-1} \Phi_{r-1}(\tau)$ and $r^{\rho(\tau))}\Phi_r(\tau)$ are equal, then the last $m-1$ blocks 
of the vectors 
$r^{\rho(\tau))}\bar A^{(r-1)}\Phi_{r-1}(\tau)$ and $r^{\rho(\tau))+1}\bar A^{(r)}\Phi_r(\tau)$ are equal.

For $\rho(\tau)=1$, there is a unique tree, $\tau_0$,  and $\Phi_{r-1}(\tau_0)=e^{(r-1)}$, 
$\Phi_{r}(\tau_0)=e^{(r)}$,  and the last $r-1$ blocks  of these vectors are equal.
For $\rho(\tau)=2$, there is a unique tree, $\{\tau_0\}$,  and $\Phi_{r-1}(\{\tau_0\})=\bar c^{(r-1)}$, 
$\Phi_{r}(\{\tau_0\})=\bar c^{(r)}$,  and also the last $r-1$ blocks  of the vectors $(r-1)\Phi_{r-1}(\{\tau_0\})$ 
and $r\Phi_{r}(\{\tau_0\})$ are equal (see equation \eqref{adjointmethod}).

By induction on $j$, assume that the property is true up to $\rho(\tau)=j$. If $\tau$ has order $j+1$, $\rho(\tau)=j+1$,
and it must be $\tau=\{\tau_1, \ldots, \tau_k\}$, where $\rho(\tau_i) \le j$ for all $i= 1,\ldots, k$. Then, 
$(r-1)^{\rho(\tau_i)-1}\Phi_{r-1}(\tau_i)$ and $r^{\rho(\tau_i)-1}\Phi_r(\tau_i)$ have at least the last 
$r-j+1$ blocks equal, which implies that
$(r-1)^{\rho(\tau_i)}A^{(r-1)}\Phi_{r-1}(\tau_i)$ and $r^{\rho(\tau_i)}A^{(r)}\Phi_r(\tau_i)$ have the last $r-j$ blocks equal. Consequently
$\Phi_{r-1}(\tau)$ and $\Phi_r(\tau)$ have the last $r-(j+1)$ blocks equal. This proves the property up to $j=r$.

Since the vectors
$\bar b^{(r-1)}$ and $\bar b^{(r)}$ have all the $s$-dimensional blocks zero except the last one, it is clear that 
\[
(r-1)^{\rho(\tau)}\bar b^{(r-1)} \Phi_{r-1}(\tau) = r^{\rho(\tau)}\bar b^{(r)} \Phi_r(\tau), \quad \forall  \rho(\tau) \le r.
\]
\end{pf}

Note that in general $\varphi_r(\tau) \ne \varphi_{r+1}(\tau)$ for  $\rho(\tau) > r$.  This is the reason why we will need to  consider the composed methods with $r=p-1$ and $r+1=p$ to guarantee that both series have the same coefficients up to order $p$, in order to prove the convergence of order $p$.

Let us define the function $z(t)$ as the Butcher series \eqref{seriesyr1} of $y_{p-1,1}$ associated to the adjoint method  \eqref{adjointmethod} with $r=p-1$, evaluated at the point $y(t)$
\[
z(t)= y(t) + \sum_{\tau} \dfrac{\alpha(\tau) }{\rho(\tau)!} \gamma(\tau) \varphi_{p-1}(\tau)  F(\tau)(y(t)) h^{\rho(\tau)}.
\]
We can state the following
\begin{thm}
If a RKRLS method has local order $p$, then it is convergent of order $p$.
\end{thm}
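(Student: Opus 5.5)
The plan is to reformulate the RKRLS method as a one-step method for the pair $(y_n,y_{n,1})$, as in \eqref{asonestep}, and then apply the convergence theory for multi-value / general linear methods in \cite{HNW} (Section III.8), in the form used in \cite{HaWa:97}. The exact value attached to the second component will be the function $z(t)$ defined above, not $y(t)$. Concretely, I take the ``exact starting values'' at $t_n$ to be $(y(t_n), z(t_n))$. Here $z(t)$ is the $h$-dependent Butcher series built with the coefficients $\varphi_{p-1}(\tau)$ and truncated at order $p$. The claim is that the local error of the two-component one-step map \eqref{asonestep}, applied to these starting values, is $\mathcal{O}(h^{p+1})$ in both components. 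Stability of the underlying linear part then gives global order $p$. That linear part is the matrix $\begin{pmatrix}1&0\\1&0\end{pmatrix}$, which has eigenvalues $1$ and $0$, the eigenvalue $1$ being simple, so it is power bounded.

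To estimate the local error I proceed in two steps.

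\textbf{First component.} Starting from $(y(t_n),z(t_n))$, $y_{n+1}$ is computed as one RK step whose first stage uses $f(z(t_n))$. By construction $z(t_n)$ has the same B-series as the quantity $y_{p-1,1}$ produced by the composed method after $p-1$ steps, expanded about the current point. Hence one step from $(y(t_n),z(t_n))$ reproduces, up to $\mathcal{O}(h^{p+1})$, the last step of the $p$-step composed method $(\widehat A^{(p)},\widehat b^{(p)})$ started at a point on the exact solution. By Definition~\ref{def1} this composed method has local order $p$, and so does the $(p-1)$-step one. Both exact flows agree, so the difference of the two gives $y_{n+1}-y(t_{n+1})=\mathcal{O}(h^{p+1})$.

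\textbf{Second component.} I need $y_{n+1,1}-z(t_{n+1})=\mathcal{O}(h^{p+1})$. The computed $y_{n+1,1}$ is the quantity $y_{p,1}$ of the $p$-step composed scheme, whose expansion about $y_{p}$ has coefficients $\varphi_p(\tau)$. Lemma~\ref{lema1} gives $\varphi_{p-1}(\tau)=\varphi_p(\tau)$ for $\rho(\tau)\le p-1$. For trees of order $p$, I note that in $z$ they already carry a factor $h^p$. Moreover $y_p=y(t_{n+1})+\mathcal{O}(h^{p+1})$ by the first component. So I must show that the order-$p$ terms agree, or else that a discrepancy of size $h^p$ in the second component only affects $y$ at order $h^{p+1}$, because $f_{n,1}$ enters multiplied by $h$.

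I expect this last point to be the main obstacle. I would handle it with the standard argument from \cite{HNW} III.8: an $\mathcal{O}(h^{p})$ local error in a component associated with the eigenvalue $0$ of the propagation matrix, which is also multiplied by $h$ when fed into the other component, contributes only $\mathcal{O}(h^{p+1})$ per step to $y$. Equivalently, I would rescale the second component's error by $h$ (the quantity $h f(y_{n,1})$) before invoking the theorem. The rest is the usual Lady Windermere's fan summation. It uses Lipschitz continuity of the increment function $\Phi$ in both arguments, and the starting procedure: the first step is an ordinary RK step, which provides $y_{1,1}$ consistent with $z(t_1)$ up to the required order by the same composed-method comparison. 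Summing $N=T/h$ local errors of size $\mathcal{O}(h^{p+1})$ then gives $y_N-y(T)=\mathcal{O}(h^p)$.
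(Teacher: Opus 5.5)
Your architecture is the paper's. You use the pair formulation \eqref{asonestep} with exact values $(y(t_n),z(t_n))$. The first component comes from comparing the $(p-1)$- and $p$-step composed methods. You start the longer one on the exact solution and use Lipschitz continuity of one discrete step; the paper instead picks $y_{n-p+1}$ so that $y_n=y(t_n)$ exactly and uses Lipschitz continuity of the flow, which is equivalent. The second component comes from Lemma~\ref{lema1}.

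However, you misread that lemma. It states $\varphi_{r-1}(\tau)=\varphi_r(\tau)$ for $\rho(\tau)\le r$, so with $r=p$ the two series agree on \emph{all} trees of order $\le p$. Hence $y_{n+1,1}-z(t_{n+1})=\mathcal{O}(h^{p+1})$ directly, which is exactly how the paper concludes, and the ``main obstacle'' never arises. Your fallback is sound but unnecessary: pass to $v_n=hf(y_{n,1})$, for which the linear part is $\begin{pmatrix}1&b_1\\0&0\end{pmatrix}$ and the increment is Lipschitz uniformly in $h$.

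The step that actually fails is your starting procedure. Since $y_{0,1}=y_0$, after one step $y_{1,1}$ is the series \eqref{seriesyr1} with coefficients $\varphi_1$. Lemma~\ref{lema1} links $\varphi_1$ to $\varphi_2=\cdots=\varphi_{p-1}$ only on trees of order $\le 2$. On $f'f'f$ the table gives a difference of $(b-a)^TM(2e-c)=(b_1-a_{s1})(\tfrac12-a^Tc)$. Local order $p$ does not force this to vanish: for New4, $b_1=0$, $a_{41}=7/2$ and $a^Tc=5/12$.

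So in general $y_{1,1}-z(t_1)$ is only $\mathcal{O}(h^3)$. In your rescaled variable that is an $\mathcal{O}(h^4)$ starting error, and the stability argument then gives only $\mathcal{O}(h^{\min(p,4)})$. Nothing in your proof covers $p\ge 5$. In the unrescaled formulation, which needs $\mathcal{O}(h^p)$ starting errors, it already fails at $p=4$.

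The repair is to let the first $p-1$ steps serve as the starting procedure. They are exactly the $(p-1)$-step composed method, so $y_{p-1}=y(t_{p-1})+\mathcal{O}(h^{p+1})$. Moreover, $y_{p-1,1}$ has exactly the coefficients $\varphi_{p-1}$ about $y_{p-1}$, hence $y_{p-1,1}=z(t_{p-1})+\mathcal{O}(h^{p+1})$. This is precisely why $z$ is built with $r=p-1$. The paper leaves the starting values implicit, but this is the choice its construction supports.
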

\begin{pf}
We can now follow the approach in \cite{HNW} (Section III.8, p. 436) for the one-step method \eqref{asonestep}. Let us take
$y_n=y(t_n)$ and $y_{n,1}= z(t_n)$. We will see that $y_{n+1}-y(t_{n} +h) = \mathcal{O}(h^{p+1})$ and 
$y_{n+1,1} - z(t_n +h) = \mathcal{O}(h^{p+1})$, that implies that the global error behaves as $\mathcal{O}(h^p)$
and therefore the method is convergent with order $p$.

First, note that by definition, $z(t_n)$ is precisely the last stage of the adjoint method  \eqref{adjointmethod} with $r=p-1$. Then, there is a starting value $y_{n-p}$ such that the $(p-1)$-step composed method gives $y_n=y(t_n)$ and $y_{n-1,s}=y_{n,1}=z(t_n)$. The next approximations
$y_{n+1}$ and $y_{n+1,1}$ are computed with the next step, or, equivalently, with the $p$-step composed method starting at the point 
$(t_n-(p-1) h, y_{n-p+1})$.  Let us denote $y(t; t_n-(p-1)h, y_{n-p+1})$ the solution of the differential equation that at $t_{n-p+1}= t_n -(p-1)h$
has a value $y_{n-p+1}$ (see Figure \ref{figura1})

\begin{figure}
\begin{center}
\begin{tikzpicture}   
\draw[thick] (-0.8,0.7) -- (0.2,0.7);  
\draw node at (0.66, 0.7) {\ldots};
\draw[thick] (1.,0.7) -- (6.5,0.7);  
\draw[thick]  (-0.8,0.6) -- (-0.8,5.7);  
\draw node at (-0.3, 0.4) {$t_{n-p+1}$};
\draw[dashed] (2,0.65) -- (2,5.5);  
\draw node at (2, 0.4) {$t_{n-1}$};
\draw[dashed] (4,0.65) -- (4,5.5);  
\draw node at (4, 0.4) {$t_{n}$};
\draw[dashed] (6,0.65) -- (6,5.5);  
\draw node at (6, 0.4) {$t_{n+1}$};
\draw[thick]  (-0.8,1.5) -- (0.2,2.75); 
\draw node at (0.66, 3.07) {$\adots$};
\draw[thick]  (1,3.3) -- (2,4) -- (4,4.62) -- (6,4.4); 
\draw node at (-0.1, 1.45) {$y_{n-p+1}$};
\draw node at (1.5, 4.1) {$y_{n-1}$};
\draw node at (3.7, 4.8) {$y_{n}$};
\draw node at (5.54, 4.25) {$y_{n+1}$};
\draw node at (8.35, 4.85) {$y(t; t_n, y_n)$};
\draw node at (9, 3.70) {$y(t; t_{n-p+1}, y_{n-p+1})$};
\draw node at (4.2, 4.2) {\Large \}};
\draw node at (4.45, 4.2) {$a$};
\draw node at (6.2, 4.59) {\small \}};
\draw node at (6.4, 4.59) {$b$};
\draw node at (6.2, 4.1) {\Large \}};
\draw node at (6.4, 4.1) {$c$};
\draw node at (6.7, 4.28) {\Huge \}};
\draw node at (7.0, 4.30) {$d$};
\draw node at (8.6, 2.92) {$a= y_n-y( t_n; t_{n-p+1}, y_{n-p+1})$};
\draw node at (8.2, 2.42) {$b=y( t_{n+1}; t_{n}, y_{n}) - y_{n+1}$};
\draw node at (8.93, 1.92) {$c= y_{n+1}-y( t_{n+1}; t_{n-p+1}, y_{n-p+1})$};
\draw node at (7.76, 1.42) {$d= y(t_{n+1}; t_n, y_n)-$};
\draw node at (8.65, 1.0) {$y(t_{n+1}; t_{n-p+1}, y_{n-p+1})$};
\filldraw[color=black] (-0.8,1.5) circle (2pt);  
\filldraw[color=black] (2,4) circle (2pt);
\filldraw[color=black] (4,4.62) circle (2.5pt);
\filldraw[color=black] (6,4.4) circle (2pt);
\filldraw[color=black] (4,3.78) circle (2pt);
\filldraw[color=black] (6,3.82) circle (2pt);
\filldraw[color=black] (6,4.8) circle (2pt);
\draw (-0.8,1.5) .. controls (-0.5,2.) and (0.,2.4) .. (0.2,2.55); 
\draw (1,2.95) .. controls (2,3.65) and (4,4) .. (7.4,3.75); 
%\draw (0,2) .. controls (2,3.78) and (4,4) .. (7.4,3.75); 
%\draw (0,2.85) .. controls (2,4.55) and (4,4.85) .. (6.5,4.60); 
\draw  (3.3,4.48) .. controls (4.7,4.8) and (5.4,4.9) .. (7.4,4.75); 
\end{tikzpicture}

\end{center}
\caption{Local error analysis at $(t_n, y_n)$}\label{fig121}
\label{figura1}
\end{figure}

Since the  composed methods $(A^{(p-1)}, b^{(p-1)})$ and  $(A^{(p)}, b^{(p)})$ have order $p$, we can ensure that
\[
\begin{array}{l}
y_n = y(t_n) =  y(t_{n}; t_{n-p+1}, y_{n-p+1}) + \mathcal{O}(h^{p+1})\\[7pt]
y_{n+1} = y(t_{n+1}; t_{n-p+1}, y_{n-p+1}) + \mathcal{O}(h^{p+1}).
\end{array}
\]
Moreover, there exist a Lipschitz constant $L$ such that
\[
\Vert y(t_{n+1}; t_{n-p+1}, y_{n-p+1})- y(t_{n+1}; t_n, y_n) \Vert \le L \Vert  y_n - y(t_{n}; t_{n-p+1}, y_{n-p+1}) \Vert
=\mathcal{O}(h^{p+1}).
\]
Therefore
\[
\begin{array}{l}
y_{n+1}- y(t_{n+1})= y_{n+1} - y(t_{n+1}; t_n, y_n) \\[6pt]
\quad =y_{n+1} - y(t_{n+1}; t_{n-p+1}, y_{n-p+1})+y(t_{n+1}; t_{n-p+1}, y_{n-p+1})-y(t_{n+1}) = \mathcal{O}(h^{p+1})
\end{array}
\]
On the other side,  by definition of $y_{n,1}=z(t_n)$
\[
y_{n,1}= y_{n} + \sum_{\tau} \dfrac{1}{\rho(\tau)!} \gamma(\tau)  
\varphi_{p-1}(\tau) F(\tau)(y_{n+1}) h^{\rho(\tau)} 
\]
and
\[
\begin{array}{l}
y_{n+1,1}= y_{n+1} + \sum_{\rho(\tau)\le p} \dfrac{1}{\rho(\tau)!} \gamma(\tau)  
\varphi_{p}(\tau) F(\tau)(y_{n+1}) h^{\rho(\tau)} + \mathcal{O}(h^{p+1})\\
z(t_{n+1})= y(t_{n+1}) + \sum_{\rho(\tau)\le p} \dfrac{1}{\rho(\tau)!} \gamma(\tau)  
\varphi_{p-1}(\tau) F(\tau)(y(t_{n+1})) h^\rho(\tau) + \mathcal{O}(h^{p+1})
\end{array}
\]
and taking into account that from Lemma \ref{lema1} $\varphi_{p-1}(\tau)=\varphi_{p}(\tau)$ for $\rho(\tau)\le p$, 
and  that $y_{n+1}-y(t_{n+1})=\mathcal{O}(h^{p+1})$,  
$F(\tau)(y_{n+1})- F(\tau)(y(t_{n+1}))= \mathcal{O}(h^{p+1})$, we deduce that
\[
y_{n+1,1} - z(t_{n+1}) = \mathcal{O}(h^{p+1})
\]
\end{pf}

\begin{rmk}\vphantom { 99 }
In the above proof, it was only used that the composed methods with $p-1$ and $p steps$ have local order $p$. Therefore, in the 
Definition \ref{def1} it would be enough to require local order $p$ for $n=p-1$ and $n=p$.  It is possible to prove that if the composed methods have order $p$ for $n=p-1,p$ then 
the RKRLS method has order $p$ for all $n$ (see next subsection). Then both definitions are equivalent.
\end{rmk}

\subsection{Order conditions}
\label{orderconditions}
In order to find the conditions that the coefficients of the modified method must satisfy to have order $p$ we 
will impose that the composed method has order $p$. First let us note that the matrices $A^{(n)}$ 
and $\widehat A^{(n)}$ are related by
\begin{equation}
\label{matrizm}
\widehat A^{(n)} =
A^{(n)} + \dfrac{1}{n}
\begin{pmatrix}
0  &   \\
M  &  0  \\
0 & M  &  0  \\
\cdots & \cdots & \ddots &\ddots  \\
0 &  \cdots & 0 & M & 0
\end{pmatrix} \equiv  A^{(n)} +  M^{(n)},
\end{equation}
with $M=e_1 (a-b)^T$.

Let us give now some previous results

\begin{lem}\label{lema2}
Let $(A,b)$ be an $s$-stage Runge-Kutta method of local order $p\ge 2$ and $M^{(n)}$ given by 
\eqref{matrizm}.
\begin{itemize}
\item 
For any  $ns$ dimensional vector $v=(v_j) \in \R^{ns}$ with $v_{s+1}= v_{2s+1} =  \cdots = v_{(n-1)s+1}=0$,
$v^T M^{(n)}=0$.
\item
If  $a^T c= 1/2$, then $M^{(n)} c^{(n)} =0$.
\item
If $b_1=0$ and $(b^T A)_1 =0$, then ${b^{(n)}}^T A^{(n)} M^{(n)}=0$.
\item
If $(a-b)^T A c= 0$, then $M^{(n)} A^{(n)} c^{(n)}  =0$.
\end{itemize}
\end{lem}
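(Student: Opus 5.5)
The plan is to read everything off the block structure of $M^{(n)}$ in \eqref{matrizm}. Its only nonzero blocks are $\frac1n M$ in block position $(k+1,k)$ for $k=1,\dots,n-1$, and $M=e_1(a-b)^T$ has a single nonzero row, the first. So the only nonzero rows of $M^{(n)}$ are rows $ks+1$, $k=1,\dots,n-1$. The row $ks+1$ of $M^{(n)}$ equals $\frac1n(a-b)^T$ placed in the $k$-th block of columns. Each item then follows by a short block computation.

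\textbf{First item.} Write $v^TM^{(n)}=\sum_j v_j\,(\text{row } j \text{ of } M^{(n)})$. Only the indices $j=ks+1$, $k\ge1$, contribute, and these $v_j$ vanish by hypothesis. Hence $v^TM^{(n)}=0$.

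\textbf{Second item.} Use the block form of $c^{(n)}$, whose $k$-th block is $\frac1n((k-1)e+c)$. The $(k+1)$-th block of $M^{(n)}c^{(n)}$ is
\[
\tfrac1{n^2}\,e_1(a-b)^T\big((k-1)e+c\big).
\]
Here $a^Te=b^Te=1$: the first because $c_s=1$, the second by order $\ge1$. Also $a^Tc=1/2$ by hypothesis and $b^Tc=1/2$ by order $\ge2$. So $(a-b)^Te=0$ and $(a-b)^Tc=0$, and every block vanishes.

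\textbf{Third item.} Reduce to the first item with $v=A^{(n)T}b^{(n)}$. I need the components $ks+1$ of ${b^{(n)}}^TA^{(n)}$ to vanish for $k\ge1$. Component $ks+1$ lies in column block $k+1$. In that column block, $A^{(n)}$ has $\frac1n A$ on the diagonal and $\frac1n eb^T$ below it. Hence column block $k+1$ of ${b^{(n)}}^TA^{(n)}$ equals
\[
\tfrac1{n^2}\Big(b^TA+(n-k-1)(b^Te)\,b^T\Big).
\]
Its first entry is $\frac1{n^2}\big((b^TA)_1+(n-k-1)b_1\big)=0$ under the hypotheses $b_1=0$ and $(b^TA)_1=0$. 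This is exactly why both conditions appear.

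\textbf{Fourth item.} First compute the $k$-th block of $A^{(n)}c^{(n)}$. Row block $k$ of $A^{(n)}$ has $\frac1n eb^T$ in column blocks $j<k$ and $\frac1n A$ in column block $k$. This gives
\[
\tfrac1{n^2}\Big(\sum_{j<k}e\,b^T\big((j-1)e+c\big)+A\big((k-1)e+c\big)\Big).
\]
Since $Ae=c$, this equals $\frac1{n^2}\big(\beta_k e+(k-1)c+Ac\big)$ for a scalar $\beta_k$.

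Then apply $M^{(n)}$: each block of $M^{(n)}A^{(n)}c^{(n)}$ is a multiple of $e_1(a-b)^T$ applied to such a vector. The terms in $e$ and $c$ vanish by $(a-b)^Te=0$ and $(a-b)^Tc=0$, as in the second item. The term in $Ac$ vanishes by the hypothesis $(a-b)^TAc=0$.

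One caveat: the fourth item as stated does not assume $a^Tc=1/2$. I expect it is intended to be used together with the second item. Without $a^Tc=1/2$, the $(k-1)c$ term gives $(k-1)(a-b)^Tc$, which does not vanish for $k\ge2$. So I would either invoke that assumption explicitly or note it as implicitly required.

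The main obstacle is only bookkeeping, namely getting the column-block sums in the third item right.
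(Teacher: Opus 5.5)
Your proposal is correct and follows the paper's route: the single-nonzero-row structure of $M^{(n)}$ for the first item, $(a-b)^Te=(a-b)^Tc=0$ for the second, and the reduction of the third item to the first with $v={A^{(n)}}^Tb^{(n)}$. You add more careful bookkeeping, including the explicit use of $a^Te=c_s=1$, which the paper leaves implicit.

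Your caveat on the fourth item is also correct, and it exposes a slip in the paper. The paper's proof tracks only $e_1(a-b)^TAc$ and $e_1(a-b)^Teb^Tc$. It overlooks the contribution $(k-1)Ae=(k-1)c$, so for $n\ge 3$ the extra hypothesis $a^Tc=1/2$ is genuinely needed. A counterexample is the midpoint rule padded with a third stage at $c_3=1$, with $a=(1,0,0)^T$ and $b=(0,1,0)^T$. It has order $2$ and $(a-b)^TAc=0$, yet the third $s$-block of $M^{(3)}A^{(3)}c^{(3)}$ is $-\tfrac{1}{54}e_1$. This is harmless downstream: the paper invokes that item only in the third bullet of Theorem~\ref{orderthm}(4), where $a^Tc=1/2$ is assumed.
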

\begin{pf}
Since $v_{j s+1}=0$ for all $j$ and only the rows $j s+1$ of $M^{(n)}$ are non zero, it is immediate that 
$v^T M^{(n)}=0$.

The second item is immediate because $b^Tc = a^T c=1/2$.

For the third item, since the first component of $b^TA$ is zero, and the first component
of $b^T e b^T$ because $b_1=0$, then the vector $v= {b^{(n)}}^T A^{(n)}$ satisfies the conditions
$v_{js+1}=0$. form the first item we have $ {b^{(n)}}^T A^{(n)} M^{(n)}=0$.
The fourth item  follows from the fact that  $ e_1 (a-b)^T Ac =0$ together with $e_1 (a-b)^T e b^T c=0$.
\end{pf}

\begin{thm}
\label{orderthm}

Let $(A,b)$ be an $s$-stage Runge-Kutta method.
\begin{enumerate}
\item 
The RKRLS method has order $1$ if and only if the RK method $(A,b)$ has order $1$.
\item 
The RKRLS method has order $2$ if and only if the RK method $(A,b)$has order $2$.
\item
The RKLS method has order $3$ if and only if the RK method $(A,b)$ has order $3$ and either $b_1=0$ or else $a^T c=1/2$.
\item
The RKRLS method has order $4$ if and only if  the RK method $(A,b)$ has order 4 and at least one of the following conditions 
is satisfied,
\begin{itemize}
\item
$(b^T A)_1 =0$  and  $b_1=0$.
\item
$a^T c=1/2$ (the method $(A, a)$ has order 2) and $b_1=0$.
\item
$a^T c= 1/2$  and $a^T A c=1/6$  and $a^T c^2 =1/3$ (the method $(A, a)$ has order 3).
\end{itemize}
\item
Let the RK method have order $p\ge 5$. If $b_1=0$ and $(b^T A)_1 =0$ and  $a^T c=1/2$ the RKRLS method has order $\ge 5$.
\end{enumerate}
\end{thm}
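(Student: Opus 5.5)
By Definition \ref{def1} and the Remark following the convergence theorem, the RKRLS method has order $p$ exactly when the composed method $(\widehat A^{(n)},\widehat b^{(n)})$ satisfies the standard order conditions ${\widehat b^{(n)}}{}^T\widehat\Phi^{(n)}(\tau)=1/\gamma(\tau)$ for all trees with $\rho(\tau)\le p$; it suffices to take $n=p-1,p$. The plan is to write $\widehat A^{(n)}=A^{(n)}+M^{(n)}$ as in \eqref{matrizm} and to expand each elementary weight of the perturbed method. We know that $(A^{(n)},b^{(n)})$ has order $p$ if and only if $(A,b)$ does. Each elementary weight of the RKRLS composed method then equals the corresponding weight of $(A^{(n)},b^{(n)})$ plus correction terms, and each correction term contains at least one factor $M^{(n)}$. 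Since $\widehat c^{(n)}=c^{(n)}$, corrections arise only from the matrix $M^{(n)}$ inserted at internal vertices of the tree. We also have $M^{(n)}e^{(n)}=0$, because $(a-b)^Te=0$; hence every leaf contributes nothing, and corrections first appear in trees of order $3$.

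Items 1 and 2 follow directly. The weight of the order-one tree is unchanged. The weight of the order-two tree is ${b^{(n)}}^T\widehat A^{(n)}e^{(n)}={b^{(n)}}^Tc^{(n)}$, which gives the equivalence with order $2$ of $(A,b)$. For item 3, the trees of order $3$ are the bushy tree, whose weight involves only $c^{(n)}$ and is unchanged, and the tall tree. For the tall tree the correction is ${b^{(n)}}^TM^{(n)}c^{(n)}$; the term ${b^{(n)}}^TA^{(n)}M^{(n)}e^{(n)}$ vanishes. Computing blockwise, ${b^{(n)}}^TM^{(n)}c^{(n)}=\frac{1}{n^2}\sum_{k=1}^{n-1}b_1(a-b)^T(ke+c)=\frac{n-1}{n^2}\,b_1(a^Tc-\tfrac12)$. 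This vanishes if and only if $b_1=0$ or $a^Tc=1/2$, which proves item 3. The "only if" directions of items 1--3 also require that order $p$ of the RKRLS method force order $p$ of $(A,b)$. For this we use the first step alone, which is the plain RK step (its first block is $A$); alternatively, we compare the weights at $n$ and $n+1$ after isolating the uncorrected terms.

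For item 4, we list the order-$4$ trees and collect the corrections, linear and quadratic in $M^{(n)}$, using Lemma \ref{lema2}. The trees with a $c$-only structure at the top, namely $b^Tc^3$ and $b^T(c\cdot Ac)$, give corrections proportional to $M^{(n)}c^{(n)}$ or to $b_1$. The tree $b^TA c^2$ gives ${b^{(n)}}^T(M^{(n)}(c^{(n)})^2)$ and ${b^{(n)}}^TA^{(n)}M^{(n)}c^{(n)}$-type terms. The tree $b^TAAc$ gives ${b^{(n)}}^TM^{(n)}A^{(n)}c^{(n)}$, ${b^{(n)}}^TA^{(n)}M^{(n)}c^{(n)}$ and ${b^{(n)}}^TM^{(n)}M^{(n)}c^{(n)}$. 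Writing each correction as an explicit scalar times $(n-1)/n^k$, or a similar $n$-dependent factor, we obtain a finite set of polynomial conditions in $b_1$, $(b^TA)_1$, $a^Tc-\tfrac12$, $a^Tc^2-\tfrac13$ and $a^TAc-\tfrac16$. The three sufficient alternatives are then checked with Lemma \ref{lema2}. If $b_1=(b^TA)_1=0$, then ${b^{(n)}}^TM^{(n)}=0$ and ${b^{(n)}}^TA^{(n)}M^{(n)}=0$, which kills everything. If $a^Tc=\tfrac12$ and $b_1=0$, then $M^{(n)}c^{(n)}=0$ and ${b^{(n)}}^TM^{(n)}=0$. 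If $(A,a)$ has order $3$, then $M^{(n)}c^{(n)}=M^{(n)}(c^{(n)})^2=M^{(n)}A^{(n)}c^{(n)}=0$, using the fourth item of Lemma \ref{lema2} and the analogous identity for $c^2$. For necessity, we require the corrections to vanish for both $n=3$ and $n=4$. Different powers of $n$ appear, which separates the conditions, and a case analysis ($b_1\neq0$ or $b_1=0$) then shows that one of the three alternatives must hold.

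Item 5 uses the same mechanism, but only for sufficiency. Under $b_1=0$ and $(b^TA)_1=0$, Lemma \ref{lema2} gives ${b^{(n)}}^TM^{(n)}=0$ and ${b^{(n)}}^TA^{(n)}M^{(n)}=0$, and $a^Tc=\tfrac12$ gives $M^{(n)}c^{(n)}=0$. Every order-$5$ tree then has a correction in which either the leftmost $M^{(n)}$ sits at depth $\le2$ from the root, which is killed by the first two facts, or $M^{(n)}$ acts directly on $c^{(n)}$ or on $e^{(n)}$. The one remaining shape is a depth-$3$ insertion in the tall tree, ${b^{(n)}}^TA^{(n)}A^{(n)}M^{(n)}c^{(n)}$, which is still zero because $M^{(n)}c^{(n)}=0$. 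I expect the main obstacle to be the necessity part of item 4, where the explicit $n$-dependent coefficients of all the correction terms must be tracked to exclude cancellations between them.
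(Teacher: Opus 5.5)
Your proposal is correct and follows the paper's proof. You split $\widehat A^{(n)}=A^{(n)}+M^{(n)}$, use Lemma~\ref{lema2} to kill the correction terms for sufficiency, and for necessity compare the order conditions at two values of $n$ (the paper uses $n=2,3$), which separates the base-method defect (scaling like $n^{1-\rho(\tau)}$) from the $M^{(n)}$-corrections, so the split $b_1\neq0$ versus $b_1=0$ yields exactly the three alternatives of item 4. Drop the ``first step alone'' justification, since local order $p$ of an $n$-step composition does not force the first plain RK step to have order $p$ (only the two-$n$ comparison works), and note the tall-tree correction is $\frac{n-1}{n^3}\,b_1(a^Tc-\tfrac12)$ (you lost the factor $1/n$ from $b^{(n)}$), which does not change the conclusion.
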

\begin{pf} Let us proof each item.
\begin{enumerate}
\item
The RKRLS method has order 1 if and only if ${b^{(n)}}^T e^{(n)}=1$.  Since 
${b^{(n)}}^T e^{(n)}= b^T e$ the result follows.
\item
The modified method has order 2 if and only if ${b^{(n)}}^T e^{(n)} =1$ and ${b^{(n)}}^T \widehat c^{(n)} =1/2$. Since 
$\widehat c^{(n)} =c^{(n)}$, 
\[
{b^{(n)}}^T \widehat c^{(n)} ={b^{(n)}}^T c^{(n)} = \dfrac{1}{n} b^T c + \dfrac{n(n-1)}{2 n^2} b^Te
\]
which is equal to $1/2$ if and only if $b^T c= 1/2$.
\item
Let us suppose that the RK method has order $3$. The RKRLS method has order 2 by item 1. It has order 3 if ${b^{(n)}}^T {({\widehat c}^{(n)})}^2 =1/3$ and
${b^{(n)}}^T \widehat A^{(n)} \widehat c^{(n)} =1/6$.  The first condition is satisfied because the base method has order 3.
For the second one, 
${b^{(n)}}^T \widehat A^{(n)} \widehat c^{(n)} = {b^{(n)}}^T A^{(n)} c^{(n)}+ {b^{(n)}}^T M^{(n)} c^{(n)}$. Let us see that 
${b^{(n)}}^T M^{(n)} c^{(n)}=0$ and the order three of the base method implies that the RKRLS has also order 3.
If $b_1=0$, from the first item of Lemma \ref{lema2} , ${b^{(n)}}^T M^{(n)} =0$ which implies that ${b^{(n)}}^T M^{(n)} c^{(n)}=0$.
If $a^Tc=1/2$,  from the second item of Lemma ref{lema2}, $M^{(n)} c^{(n)}$ and therefore ${b^{(n)}}^T M^{(n)} c^{(n)}=0$.

Let us suppose now that the RKRLS method has order $3$. From the previous items the RK method has at least order $2$.
Since the RKRLS method has order $3$ it satisfies (taking $n=2$)
\[
\begin{array}{ll}
{b^{(2)}}^T (\widehat c^{(2)})^2 = &
{b^{(2)}}^T (c^{(2)})^2= \\
& \dfrac{2}{2^3} b^T c^2 + \dfrac{2}{2^3} b^T c + \dfrac{1}{2^3} b^T e = \\[7pt]
& \dfrac{1}{4} b^T c^2 + \dfrac{1}{4}  = 1/3
\end{array}
\]
which implies that $b^T c^2=1/3$. It also satisfies
\[
\begin{array}{ll}
{b^{(2)}}^T \widehat A^{(2)} \widehat c^{(2)} = &
{b^{(2)}}^T (A^{(2)}+ M^{(2)})  c^{(2)}= \\
& \dfrac{2}{2^3}b^T A c+ \dfrac{1}{2} b^T e b^T c + b^T e + \dfrac{1}{2^3}b^T e_1 (a-b)^T c = \\[10pt]
& \dfrac{1}{4} b^T A c + \dfrac{1}{8}+ \dfrac{1}{8} b_1 (a-b)^T c = 1/6
\end{array}
\]
and (taking $n=3$)
\[
\begin{array}{ll}
{b^{(3)}}^T \widehat A^{(3)} \widehat c^{(3)} = &
{b^{(3)}}^T (A^{(3)}+ M^{(3)})  c^{(3)}= \\[7pt]
& \dfrac{1}{9} b^T A c + \dfrac{4}{27}+ \dfrac{2}{27} b_1 (a-b)^T c = 1/6
\end{array}
\]
The above two equations have a unique solution
\[
b^T A c= 1/6, \qquad  b_1 (a-b)^T c =0
\]
which implies that the RK method has order $3$ and that either $b_1=0$ or else $a^T c= b^Tc =1/2$.
\item
Let us suppose that the RK method has order four. The RKRLS method has order 3 by item 2. It has order four if it satisfies the following order conditions
\[
\begin{array}{ll}
F(\tau)= f'''(f,f,f): & {b^{(n)}}^T {({\widehat c}^{(n)})}^3 =1/4, \\
F(\tau)= f'f''(f,f): & {b^{(n)}}^T \widehat A^{(n)} {({\widehat c}^{(n)})}^2 =1/12, \\
F(\tau)= f''(f'f,f): & {b^{(n)}}^T (\widehat A^{(n)}  \widehat c^{(n)} \cdot  \widehat c^{(n)}) =1/8, \\
F(\tau)= f'f'f'f: & {b^{(n)}}^T \widehat A^{(n)} \widehat A^{(n)} \widehat c^{(n)} =1/24.
\end{array}
\]

Using the conditions of order four for the base method $(A,b)$, we will have order four for the RKRLS method if
\begin{equation}
\label{order4}
\begin{array}{ll}
F(\tau)= f'f''(f,f): &  {b^{(n)}}^T M^{(n)} {({ c}^{(n)})}^2 =0, \\
F(\tau)= f''(f'f,f): &  (b^{(n)}\cdot  c^{(n)})^T (M^{(n)}   c^{(n)} ) =0, \\
F(\tau)= f'f'f'f: &  {b^{(n)}}^T  A^{(n)} M^{(n)}  c^{(n)} =0, \\
F(\tau)= f'f'f'f: &  {b^{(n)}}^T  M^{(n)} A^{(n)}  c^{(n)} =0.
\end{array}
\end{equation}

If $b_1=0$, the first, second  and fourth conditions of \eqref{order4} are satisfied by the first item of Lemma \ref{lema2}.
If $a^T c=1/2$, the third conditions is satisfied by the second item of Lemma \ref{lema2}. If 
$(b^T A)_s =0$ the third condition is satisfied by the third item of Lemma \ref{lema2}.

If $a^Tc=1/2$, the second and third order condition are satisfied. If $a^TAc=1/6$, the four condition is satisfied. Finally, if $a^Tc^2=1/3$, the fist condition is also satisfied.

Let us suppose now that the RKRLS method has order 4.  From previous items, the RK method has order 3 and either
$b_1=0$ or else $a^Tc=1/2$.  By the order four, ${b^{(2)}}^T (c^{(2)})^3=1/4$ and therefore $b^Tc^3=1/4$.
Also
\[
\begin{array}{l}
{b^{(2)}}^T \widehat A^{(2)} \widehat (c^{(2)})^2 = 
\dfrac{1}{9} b^T A c^2 + \dfrac{4}{27}+ \dfrac{2}{27} b_1 (a-b)^T c^2 = \dfrac{1}{12} \\[8pt]
{b^{(3)}}^T \widehat A^{(3)} \widehat (c^{(3)})^2 = 
\dfrac{1}{9} b^T A c^2 + \dfrac{4}{27}+ \dfrac{2}{27} b_1 (a-b)^T c^2 = \dfrac{1}{12}
\end{array}
\]
from where $b^TAc^2=1/12$ and  $b_1 (a-b)^T c^2=0$.
Taking now the order condition
\[
{b^{(2)}\cdot \widehat c^{(2)}}^T \widehat A^{(2)} \widehat c^{(2)}= 
\dfrac{1}{9} (b\cdot c)^T A c + \dfrac{4}{27} = \dfrac{1}{8}
\]
we get $(b\cdot c)^T A c =1/8$.

Finally, from
\[
\begin{array}{l}
{b^{(2)}}^T (\widehat A^{(2)})^2\widehat c^{(2)} = 
\dfrac{1}{8} b^T A^2 c + \dfrac{7}{192}+ \dfrac{1}{16} b_1 (a-b)^T A c \\[10pt]
\hskip 3cm + \dfrac{1}{16} b^T A e_1 (a-b)^T c= \dfrac{1}{24} \\[8pt]
{b^{(3)}}^T (\widehat A^{(3)})^2\widehat c^{(3)} = 
\dfrac{1}{27} b^T A^2 c + \dfrac{13}{351}+ \dfrac{2}{81} b_1 (a-b)^T A c \\[10pt]
\hskip 3cm + \dfrac{2}{81} b^T A e_1 (a-b)^T c= \dfrac{1}{24} 
\end{array}
\]
we obtain $b^TAAc=1/24$ and $b_1 (a-b)^T A c + b^T A e_1 (a-b)^T c =0$. But we know that $b_1=0$ or $(a-b)^Tc=0$.
If $b_1=0$ then  $b^T A e_1 (a-b)^T c =0$ and either $(b^TA)_1=0$ or else $a^T c=b^Tc=1/2$.
If $a^Tc=1/2$, then $b_1 (a-b)^TAc=0$ and either $b_1=0$ or $a^TAc= b^TAc=1/6$.

\item
The modified method has order 4 by item 3. It has order 5 if the order 5 conditions are satisfied. These can be
reduced to
\[
\begin{array}{ll}
F(\tau)= f'f'''(f,f,f): & {b^{(n)}}^T M^{(n)} {({ c}^{(n)})}^3 =0, \\
F(\tau)= f''(f''(f,f),f): & (b^{(n)}\cdot  c^{(n)})^T (M^{(n)}   {c^{(n)}}^2) =0, \\
F(\tau)= f'''(f'f,f,f): & (b^{(n)}\cdot  {c^{(n)}}^2)^T (M^{(n)}  c^{(n)}) =0, \\
F(\tau)= f'f'f''(f,f): & {b^{(n)}}^T  M^{(n)} A^{(n)}  {c^{(n)}}^2 =0 \\
F(\tau)= f''(f'f'f,f): & (b^{(n)}\cdot  c^{(n)})^T  (M^{(n)} A^{(n)}  c^{(n)}) =0 \\
F(\tau)= f'f'f'f'f: & {b^{(n)}}^T  M^{(n)} A^{(n)}  A^{(n)} c^{(n)}) =0, \\
F(\tau)= f'f'f'f'f: & {b^{(n)}}^T  M^{(n)} M^{(n)}  A^{(n)} c^{(n)}) =0, \\
F(\tau)= f'f'f'f'f: & {b^{(n)}}^T  M^{(n)} A^{(n)}  M^{(n)} c^{(n)}) =0, \\
F(\tau)= f'f'f'f'f: & {b^{(n)}}^T  M^{(n)} M^{(n)}  M^{(n)} c^{(n)}) =0, \\
F(\tau)= f''(f'f,f'f): & {b^{(n)}}^T  (A^{(n)} c^{(n)}) \cdot ( M^{(n)} c^{(n)})=0 \\
F(\tau)= f''(f'f,f'f): & {b^{(n)}}^T  (M^{(n)} c^{(n)}) \cdot ( M^{(n)} c^{(n)})=0 \\
F(\tau)= f'f'f'f'f: & {b^{(n)}}^T  A^{(n)} A^{(n)}  M^{(n)} c^{(n)}=0, \\
F(\tau)= f'f'f'f'f: & {b^{(n)}}^T  A^{(n)} M^{(n)}  M^{(n)} c^{(n)}=0, \\
F(\tau)= f''(f'f'f,f): & (b^{(n)}\cdot  c^{(n)})^T  (A^{(n)} M^{(n)}  c^{(n)})=0 \\
F(\tau)= f'f'f''(f,f): & {b^{(n)}}^T  A^{(n)} M^{(n)}  {c^{(n)}}^2 =0 \\
F(\tau)= f'f'f'f'f: & {b^{(n)}}^T  A^{(n)} M^{(n)}  A^{(n)} c^{(n)} =0.
\end{array}
\]
The first item of Lemma \ref{lema2} implies that the first nine conditions are satisfied. The second item
of Lemma \ref{lema2} implies that conditions ten to fourteen are satisfied.  Finally, the third condition of 
Lemma \ref{lema2} implies that the remaining conditions are also satisfied.

\end{enumerate}
\end{pf}

\section{RK methods reusing the last stage with orders 2, 3, 4 and 5}
In this section we show that, there exist standard RK methods  with $c_s=1$,  order $p= 2,3,4,5$  and minimum number of stages $s=2,3,4,6$ whose associated RKRLS methods ($s-1$ effective stages) have the same order $p$. 

\subsection{RKRLS methods with $s=2$ and order 2}
There exists a one-parameter family of RK methods with $s=2$ stages and order $p=2$ given by
\[
\begin{array}{c|cc}
0 & \\
c_2 & c_2  \\
\hline
&  b_1  & b_2
\end{array}
\qquad  
b_1 = 1 - b_2,  \qquad  b_2= 1/ c_2.
\]
With the condition $c_2=1$ ($b_2= 1/2$, $b_1=1/2$) there is a unique method.  Acccording the results in Theorem
\ref{orderthm}, the associated RKRLS also has order 2.

\subsection{RKRLS methods with $s=3$ and order 3}
There exists a one-parameter family of RK methods with $s=3$ stages, $c_3=1$ and order $p=3$ given by
\[
\begin{array}{c|ccc}
0 & \\
c_2 & c_2  \\
1 & 1-a_{32} & a_{32}  \\
\hline
&  b_1  & b_2 & b_3
\end{array}
\qquad  
\begin{array}{ll}
b_2= \dfrac{1}{6c_2(1-c_2) }, \ &   a_{32}= \dfrac{(1-c_2)}{c_2(2-3c_2)}, \\[12pt]
b_3= \dfrac{2-3 c_2}{6(1-c_2)},  &  b_1 = \dfrac{3c_2-1}{6c_2}.
\end{array}
\]
None of these methods satisfies the condition $(b^TA)_1=0$ and a unique method ($c_2=1/3$, $a_{32}=2$, $b_2=3/4$, $b_3=1/4$) satisfies the condition $b_1=0$. Acccording the results in Theorem \ref{orderthm}, the associated RKRLS also has order 3.
\subsection{RKRLS methods with $s=4$ and order 4}
It is known that any RK method with four stages and order four must satisfy the simplifying condition  $b^T a = b^T - (b\cdot c)^T$ (usually denoted by $D(1)$) and therefore the last node must be $c_4=1$.  This implies that $(b^TA)_1=0$ for any method of order four and four stages. There are four families of methods of order four (see \cite{Butcher:16}).

One of the families is obtained with $c_2=c_3$ and is given by the Butcher tableau
\[
\begin{array}{c|cccc}
0 & \\
\dfrac{1}{2} & \dfrac{1}{2}  \\[5pt]
\dfrac{1}{2} & \dfrac{a_{43}-1}{2a_{43}} & \dfrac{1}{2a_{43}} \\[8pt]
1  & 0  &  1-a_{43}  &  a_{43}  \\[4pt]
\hline
\vrule height 16pt width 0pt&  \dfrac{1}{6}  & \dfrac{2-a_{43}}{3} & \dfrac{a_{43}}{3}  & \dfrac{1}{6}
\end{array}
\]
In this family, $b_1 \ne 0$, but $(Ac)_4=1/2$. Therefore, according to Theorem \ref{orderthm} the associated RKRLS methods will have order 3. 
The classical RK, obtained for $a_{43}=1$,  belongs to this family and its associated RKRLS has order 3.

Another family is obtained with the condition $c_3 (1-c_2)(c_2-c_3) \ne 0$. It is a two-parameter family depending on $c_2$ and $c_3$ and with coefficients
\[
\begin{array}{ll}
b_2= \dfrac{2c_3-1}{12 c_2 (1-c_2)(c_3-c_2)}, & a_{32}= \dfrac{c_3(c_3-c_2)}{2c_2(1-2c_2)},  \\[12pt]
b_3= \dfrac{1-2c_2}{12 c_3 (1-c_2)(c_3-c_2)}, &  a_{42}= \dfrac{(1-c_2) (c_2+5c_3-4c_3^2-2)}{
  2 c_2 (c_3-c_2)(3+6c_2 c_3 -4c_2 -4c_3)} ,
 \\[10pt]
 b_4= \dfrac{3+6c_2 c_3 -4c_2 -4c_3)}{12(1-c_2)(1-c_3)}, & a_{43} = \dfrac{(1-c_2) (1-c_3)(1-2c_2)}{
  c_3(c_3-c_2)(3+6c_2 c_3 -4c_2 -4c_3)} .
\end{array}
\]
The $3/8$ RK method belongs to this family. For this method $b_1=1/8 \ne 0$ and $(Ac)_4 = 1/3 \ne 1/2$. Theorem
\ref{orderthm} guarantees order 2 for the associated RKRLS.

There is a subfamily, depending on one parameter $c_2$, with $b_1=0$ given by
\[
\begin{array}{ll}
c_3=\dfrac{2 c_2-1}{6 c_2-2},  &  a_{32}= \dfrac{1 - 4 c_2 + 6 c_2^2}{8 c_2(1 - 3 c_2)^2}, \\[12pt]
b_2= \dfrac{1}{6 - 30 c_2 + 60 c_2^2 - 36 c_2^3}, &   a_{42}= \dfrac{-(-1 + c_2)^2 (1 - 8 c_2 + 18 c_2^2)}{
  2 c_2 (1 - 10 c_2 + 36 c_2^2 - 60 c_2^3 + 36 c_2^4)}, \\[12pt]
b_3= \dfrac{2 (-1 + 3 c_2)^3}{3 (-1 + 4 c_2) (1 - 4 c_2 + 6 c_2^2)}, &  
a_{43} = \dfrac{2 (1 - 3 c_2)^2 (-1 + c_2) (-1 + 4 c_2)}{1 - 10 c_2 + 36 c_2^2 - 
 60 c2^3 + 36 c2^4},  \\[10pt]
 b_4= \dfrac{1 - 6 c_2 + 6 c_2^2}{6 - 30 c_2 + 24 c_2^2}.
\end{array}
\]
Acccording the results in Theorem \ref{orderthm}, the associated RKRLS methods also have order 4.

\subsection{RKRLS methods with $s=6$ and order 5}
Let us consider the three-parameter family of methods, depending on the coefficients $c_3, c_4$ and $c_5$, with $s=6$ stages and order $p=5$ obtained in \cite{DoPri:80}.
This family was obtained by imposing $c_6=1$ and the simplifying conditions $D(1)$
\[
b^T A = b^T - (b\cdot c)^T,
\]
and the modified $C(2)$
\[
Ac = c^2/2- (c_2^2/2) e_1.
\]
Consequently, all the methods satisfy $(b^T A)_1=0$ and $(Ac)_6=1/2$. The associated RKRLS methods will have order 3,
In particular for the DOPRI method of order 5, for which $b_1= 35/384 \ne 0$, the associated RKRLS has order 3.
However the subfamily of methods with
\[
c_5 = \dfrac{3 - 5 c_3 - 5 c_4 + 10 c_3 c_4}{5 - 10 c_3 - 10 c_4 + 30 c_3 c_4},
\]
satisfies $b_1=0$ which gives a two-parameter family of RKRLS methods with order 5.

\section{Numerical experiments}
In this section we present some numerical experiments to verify the results obtained in sections 2 and 3.  In the experiments, we will use the following methods
\begin{description}
\item[New2]
The only method with $s=2$, $p=2$ and $c_2=1$. The associated RKRLS  has order 2
\item[New3]
The only method with $s=3$, $p=3$, $c_3=1$ and $b_1=0$. The associated RKRLS has order 3
\item[RK-3/8]
The $3/8$ RK method with $s=4$, $p=4$, $c_2= 1/3$ and $c_3= 2/3$. The associated RKRLS has order 2
\item[RKClassic]
The classical RK with $s=4$, $p=4$, $c_2=c_3=1/2$. The associated RKRLS has order 3
\item[New4]
The method with with $s=4$, $p=4$, $b_1=0$ and $c_2= 1/6$. The associated RKRLS has order 4
\item[DOPRI54]
The Dormand and Prince method with with $s=6$, $p=5$,  The RKRLS has order 3
\item[New5]
The method with with $s=6$, $p=4$, $b_1=0$, and 
\[
\begin{array}{l}
c_3= 0.1574989977372333627197954851966028754675,  \\
c_4= 0.5649477718721229393448029991747476923492, \\ 
c_5= 0.62386437635858023903237445638618758645.
\end{array}
\]
The associated RKRLS has order 5.
\end{description}

With all these methods we have solved the Initial Value Problems 
\begin{enumerate}
\item
The two-body problem with excentricity  $e=0.1$ (problem D1 from the DETEST package \cite{detest,detest1}) integrated along the interval $[0,20]$
\[
\begin{array}{lll}
 x'' = -\dfrac{x}{\sqrt{(x^2+y^2)^3}},&  x(0)= 1-e, &  x'(0)=0, \\[10pt]
 y'' = -\dfrac{y}{\sqrt{(x^2+y^2)^3}}, & y(0)= 0, & y'(0)=\dfrac{\sqrt{1+e}}{\sqrt{1-e}}.
 \end{array}
\]
\item
The problem A3 from the DETEST package \cite{detest,detest1} integrated along the interval $[0, 20]$
\[
y' =y\cos t, \qquad  y(0)=1.
\]
\end{enumerate}

Using step sizes  $h= 0.2/2^j, j= 0, \ldots, 5$ we have computed, for each method and problem, the global errors GE obtained at the end point of the integration interval.
In Figures \ref{figure1} and \ref{figure2} we display  the global error GE against the step size used in double logarithmic scale. It is clearly observed that in all cases, the slope of the curves coincides with the order predicted by Theorem \ref{orderthm}.

\begin{figure}
\centering
\includegraphics[width=0.488\textwidth]{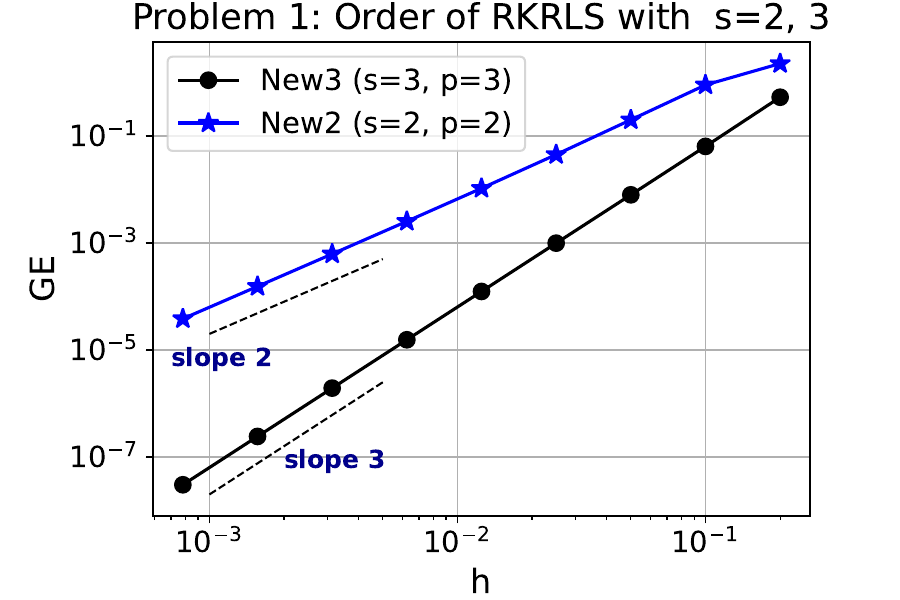}
\includegraphics[width=0.488\textwidth]{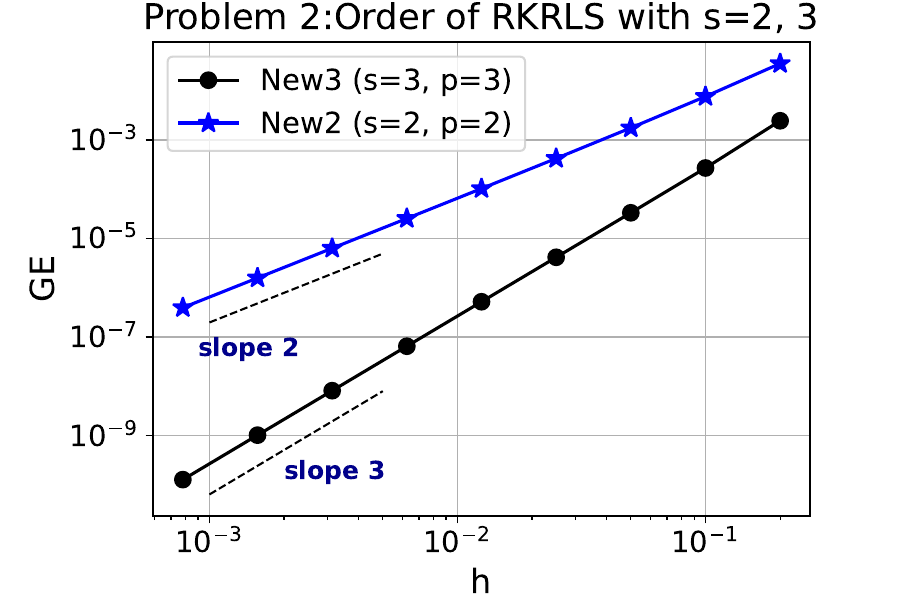}
\caption{Convergence orders of RKRLS methods with 2 and 3 stages, for problems 1 and 2}
\label{figure1}
\end{figure}

\begin{figure}
\centering
\includegraphics[width=0.488\textwidth]{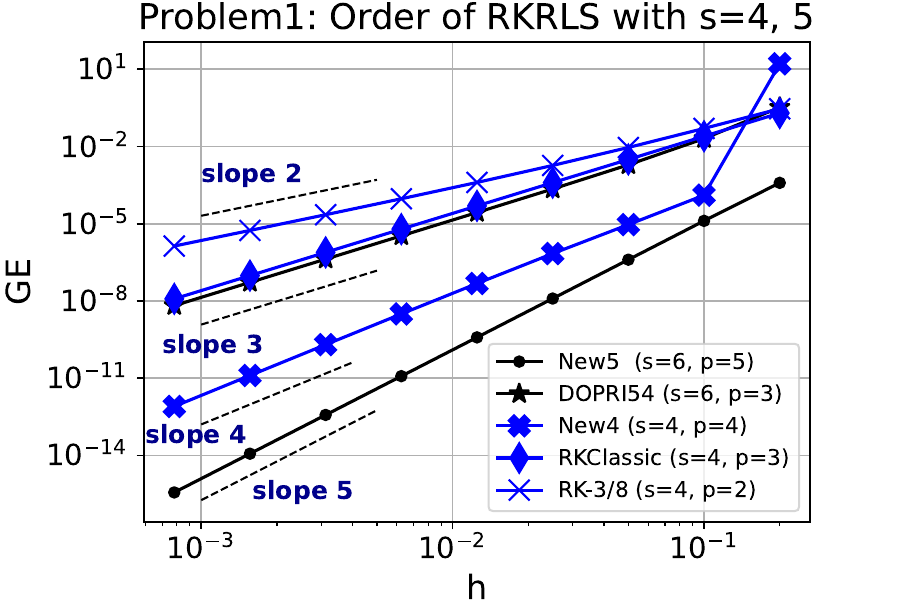}
\includegraphics[width=0.488\textwidth]{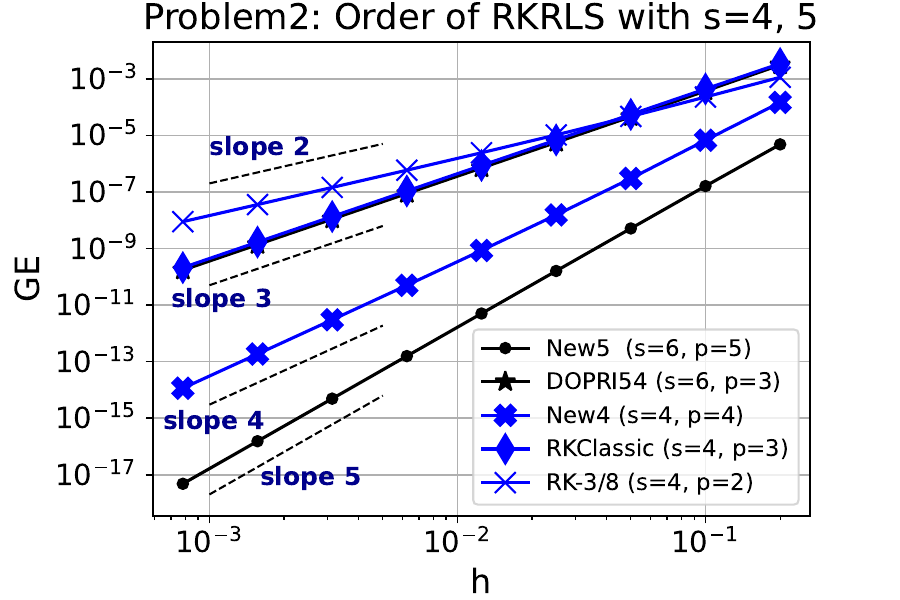}
\caption{Convergence orders of RKRLS methods with 4 and 5 stages, for problems 1 and 2}
\label{figure2}
\end{figure}

When you reduce the computational cost of a given method by modifying it, it is not unusual to pay some price, either in stability or else in accuracy. In order to  test this for RKRLS methods we have computed also the  golbal errors given by the underlying methods without reusing the last stage. In Tables \ref{table1} and \ref{table2} we collect these values together with the ones given by the RKRLS method for problem 1 and in Tables \ref{table3} and \ref{table4} the results for problem 2.  It is clear from the results that the accuracy of RKRLS methods give very similar global errors than their counterpart RK methods.  It is remarkable the case of problem 1 integrated with the method RKRLS of order 4 with time step $h=0.2$. The global error is very large, which can be due to stability. The RKRLS method has probably a smaller stability region than the associated RK method. The effects of reusing the last stage on stability of the method is currently  being studied object.

\begin{table}
\caption{Comparison of global errors of RKRLS and RK methods of orders $2$ and $3$ for problem 1}
\label{table1}
\begin{center}
\begin{tabular}{ |l|l|l|l|l|} 
\multicolumn{5}{c}{Problem 1}  \\
\hline
&\multicolumn{2}{|c|}{Order $p=2$} & \multicolumn{2}{|c|}{Order $p=3$} \\
\hline
$h$      &  RK     &  RKRLS  & RKRLS  & RK \\
\hline
0.2      & 1.897 e-00   & 2.255 e-00   & 5.137 e-01  &  5.297 e-01 \\ 
0.1      & 5.249 e-01   & 8.977 e-01   & 6.350 e-02  &  6.382 e-02 \\ 
0.05     & 1.134 e-01   & 2.002 e-01   & 7.931 e-03  &  7.942 e-03 \\ 
0.025    & 2.557 e-02   & 4.490 e-02   & 9.925 e-04  &  9.929 e-04 \\
0.0125   & 6.030 e-03   & 1.052 e-02   & 1.242 e-04  &  1.242 e-04 \\
0.00625  & 1.461 e-03   & 2.539 e-03   & 1.553 e-05  &  1.553 e-05 \\
 \hline
\end{tabular}
\end{center}
\end{table}

\begin{table}
\caption{Comparison of global errors of RKRLS and RK methods of orders $4$ and $5$}
\label{table2}
\begin{center}
\begin{tabular}{ |l|l|l|l|l|} 
\multicolumn{5}{c}{Problem 1}  \\
\hline
&\multicolumn{2}{|c|}{Order $p=4$} & \multicolumn{2}{|c|}{Order $p=5$} \\
\hline
$h$      &  RK     &  RKRLS  & RKRLS  & RK \\
\hline
 0.2     & 8.529 e-03 & 1.632 e+02 & 4.266 e-04  &  3.872 e-04 \\ 
 0.1     & 3.735 e-04 & 1.284 e-04 & 1.397 e-05  &  1.292 e-05 \\ 
 0.05    & 1.770 e-05 & 9.233 e-06 & 4.449 e-07  &  4.011 e-07 \\ 
 0.025   & 9.343 e-07 & 6.910 e-07 & 1.401 e-08  &  1.238 e-08 \\
0.0125   & 5.552 e-08 & 4.757 e-08 & 4.393 e-10  &  3.841 e-10 \\
0.00625  & 3.379 e-09 & 3.123 e-09 & 1.375 e-11  &  1.195 e-11 \\
 \hline
\end{tabular}
\end{center}
\end{table}

\begin{table}
\caption{Comparison of global errors of RKRLS and RK methods of orders $2$ and $3$ for problem 1}
\label{table3}
\begin{center}
\begin{tabular}{ |l|l|l|l|l|} 
\multicolumn{5}{c}{Problem 2}  \\
\hline
&\multicolumn{2}{|c|}{Order $p=2$} & \multicolumn{2}{|c|}{Order $p=3$} \\
\hline
$h$      &  RK     &  RKRLS  & RKRLS  & RK \\
\hline
0.2      & 2.600 e-02   & 3.545 e-02   & 2.122 e-03  &  2.439 e-03 \\ 
0.1      & 5.880 e-03   & 7.599 e-03   & 6.133 e-04  &  2.685 e-04 \\ 
0.05     & 1.396 e-03   & 1.751 e-03   & 3.279 e-05  &  3.306 e-05 \\ 
0.025    & 3.397 e-04   & 4.196 e-04   & 4.118 e-06  &  4.130 e-06 \\
0.0125   & 8.375 e-05   & 1.026 e-04   & 5.163 e-07  &  5.169 e-07 \\
0.00625  & 2.079 e-05   & 2.536 e-05   & 6.465 e-08  &  6.468 e-08 \\
 \hline
\end{tabular}
\end{center}
\end{table}

\begin{table}
\caption{Comparison of global errors of RKRLS and RK methods of orders $4$ and $5$}
\label{table4}
\begin{center}
\begin{tabular}{ |l|l|l|l|l|} 
\multicolumn{5}{c}{Problem 2}  \\
\hline
&\multicolumn{2}{|c|}{Order $p=4$} & \multicolumn{2}{|c|}{Order $p=5$} \\
\hline
$h$      &  RK     &  RKRLS  & RKRLS  & RK \\
\hline
 0.2     & 5.057 e-05 & 1.464 e-04 & 5.690 e-06  &  4.787 e-06 \\ 
 0.1     & 2.452 e-06 & 6.708 e-06 & 1.832 e-07  &  1.607 e-07 \\ 
 0.05    & 1.595 e-07 & 2.986 e-07 & 5.752 e-09  &  5.104 e-09 \\ 
 0.025   & 1.018 e-08 & 1.530 e-08 & 1.797 e-10  &  1.600 e-10 \\
0.0125   & 6.431 e-10 & 8.495 e-10 & 5.610 e-12  &  5.002 e-12 \\
0.00625  & 4.040 e-11 & 4.974 e-11 & 1.752 e-13  &  1.563 e-13 \\
 \hline
\end{tabular}
\end{center}
\end{table}

\section{Conclusions}
We have studied the order of RK method
in which the last function evaluation  in a step
is reused (RKRLS), substituting the first evaluation for the next step, 
by means of the composition of RK methods and the adjoint (also named reverse) 
of an RK method. 

We have proved that if the composed RKRLS methods have order $p$,
the the RKRLS method has also order $p$.
We have obtained the conditions that the coefficients of an explicit RK method
must satisfy to get an RKRLS method with the same order.
With a detailed study of the families of $s$--stage explicit RK methods with the highest order $p=s$ 
we have proved that with some simple additional conditions on the coefficients
the associated RKRLS methods retain  the order of the original RK method. This is relevant because
we have proved that it is possible to maintain the order reducing the 
computational cost from $s$ to $(s-1)$ evaluations of the vector field per step.

The results of some numerical experiments have confirmed the theoretical results.

%
%  REFERENCES
%

\baselineskip=0.9\normalbaselineskip

\bibliographystyle{model1b-num-names}
{

}

\end{document}